\documentclass[12pt]{amsart}
\usepackage{tikz}
\usepackage{amsmath,amssymb}
\usepackage{amsopn}
\usepackage{mathrsfs}
\usepackage{bm}
\usepackage{url}
\usepackage{listings,jvlisting}
\usepackage{tcolorbox}
\usepackage{ascmac}
\usepackage{amsthm}
\usepackage{ulem}
\usepackage{graphics}
\usepackage[export]{adjustbox}
\usepackage{tikz}
\usepackage{subcaption}
\usepackage[margin=25truemm,bottom=30truemm,top=30truemm]{geometry}
\usepackage{comment}
\usepackage{hyperref}
\usepackage{xcolor}
\usepackage{mathdots}
\usepackage{tikz-3dplot}
\usepackage[title]{appendix}

\makeatletter
\def\subsection{\@startsection{subsection}{2}%
  \z@{.5\linespacing\@plus.7\linespacing}{.5\linespacing}%
  {\normalfont\bfseries}}
\makeatother

\usetikzlibrary{knots}
\usetikzlibrary{braids}
\usetikzlibrary{decorations.markings}
\usetikzlibrary{hobby}
\usetikzlibrary{arrows.meta}
\usetikzlibrary{positioning}
\usetikzlibrary{calc}

\theoremstyle{definition}
\newtheorem{definition}{Definition}[section]

\theoremstyle{plain}
\newtheorem{theorem}[definition]{Theorem}
\newtheorem{corollary}[definition]{Corollary}
\newtheorem{proposition}[definition]{Proposition}
\newtheorem{example}[definition]{Example}
\newtheorem{lemma}[definition]{Lemma}

\theoremstyle{remark}
\newtheorem{remark}[definition]{Remark}

\numberwithin{equation}{section}

\newcommand{\CC}{\mathbb{C}}

\newcommand{\ZZ}{\mathbb{Z}}
\newcommand{\QQ}{\mathbb{Q}}

\newcommand{\ie}{\emph{i.e.}\@ifnextchar.{\!\@gobble}{}}
\newcommand{\eg}{\emph{e.g.}\@ifnextchar.{\!\@gobble}{}}
\newcommand{\etc}{etc\@ifnextchar.{}{.\@}}

\newcommand{\QC}{\tilde{Q}}

\newcommand{\crossing}{
    \begin{tikzpicture}[scale=0.5,baseline={(0,-0.1)}]
        \draw (-1,1) to (1,-1);
        \draw[white, line width=7pt] (0.5,0.5) to (-0.5,-0.5);
        \draw (1,1) to (-1,-1);
    \end{tikzpicture}
}

\newcommand{\crossingflip}{
    \begin{tikzpicture}[scale=0.5,baseline={(0,-0.1)},rotate=90,transform shape]
        \draw (-1,1) to (1,-1);
        \draw[white, line width=7pt] (0.5,0.5) to (-0.5,-0.5);
        \draw (1,1) to (-1,-1);
    \end{tikzpicture}
}

\newcommand{\crossingzero}{
    \begin{tikzpicture}[scale=0.5,baseline={(0,-0.1)}]
        \draw (1,1) to[curve through={(0.5,0)}] (1,-1);
        \draw (-1,1) to[curve through={(-0.5,0)}] (-1,-1);
    \end{tikzpicture}
}

\newcommand{\crossinginfty}{
    \begin{tikzpicture}[scale=0.5,baseline={(0,-0.1)}]
        \draw (1,1) to[curve through={(0,0.5)}] (-1,1);
        \draw (1,-1) to[curve through={(0,-0.5)}] (-1,-1);
    \end{tikzpicture}
}

\newcommand{\twobridge}{
    \begin{tikzpicture}
        \pic[
            braid/.cd,
            number of strands=3,
            strand 1/.style={black},
            strand 2/.style={black},
            strand 3/.style={black},
            gap=0.1,
            scale=1
        ] {braid={s_2 s_1^{-1} s_2 s_1^{-1}}};
        \node at (1,-5) {\Large$\vdots$};
        \begin{scope}[yshift=-6cm]
            \pic[
                braid/.cd,
                number of strands=3,
                strand 1/.style={black},
                strand 2/.style={black},
                strand 3/.style={black},
                gap=0.1,
                scale=1
            ] {braid={s_2 s_1^{-1}}};
        \end{scope}

        \draw (0,0) to[in=90,out=90] (1,0);
        \draw (1,-8.5) to[in=-90,out=-90] (2,-8.5);
        \draw (2,0) to[in=0,out=90] (0.5,1);
        \draw (0.5,1) to[in=90,out=180] (-1,-4.25);
        \draw (-1,-4.25) to[in=180,out=-90] (-0.25,-9);
        \draw (-0.25,-9) to[in=-90,out=0] (0,-8.5);
    \end{tikzpicture}
}

\begin{document}

\title{On the Mahler measure and root distribution of the $Q$-polynomial of links}
\author{Kotaro Shoji}
\address{Department of Mathematics, Graduate School of Science, Osaka Metropolitan University, 
3-3-138, Sugimoto, Sumiyoshi-ku, Osaka, 558-8585, Japan}
\email{sj26992g@st.omu.ac.jp}
\date{}

\begin{abstract}
    We study the roots and the Mahler measure of the $Q$-polynomial of links.
    We first consider links obtained by adding twists to a pair of parallel
    strands. We prove that the Mahler measure of the transformed
    $Q$-polynomial converges as the number of twists increases. We also show
    that all but a uniformly bounded number of distinct roots of the
    $Q$-polynomial approach the real interval $[-2,2]$. This behavior is
    different from that of the roots of
    the Jones polynomial under twisting. Finally, we compare real and unit-circle
    roots of the Alexander, Jones, and $Q$-polynomials, and give an infinite family of
    $2$-bridge links whose $Q$-polynomials have only real nonzero roots.
\end{abstract}

\keywords{Knot polynomial, $Q$-polynomial, Mahler measure, roots of unity, twisting}

\maketitle

\section{Introduction}

A knot is an embedding of a circle in three-dimensional space, and a link is
an embedding of a disjoint union of circles. Polynomial invariants are useful
tools for studying knots and links. Examples include the Alexander, Jones,
HOMFLY, Kauffman, and $Q$-polynomials. The roots of these polynomials have also
been studied in many ways.

One of the broader goals of this study is to understand how the roots of knot
polynomials are related to the topology of knots and links and to the geometry
of their complements. In this paper, we consider a first step toward this goal
for the $Q$-polynomial. We study how its roots and Mahler measure change when
twists are added to a link diagram. Twisting is closely related to Dehn surgery
on link complements, so twist families may be useful for studying such a
relation. Although we do not obtain a direct geometric interpretation in this
paper, we hope that our results will be useful for further study of this
question.

The $Q$-polynomial may be viewed as a twin of the Jones polynomial in the
following sense: both are obtained by specializing the Kauffman polynomial
$F_L(a,x)$. To define it, first consider the regular isotopy invariant
$\Lambda_D(a,x)$ of an unoriented link diagram $D$, normalized by
$\Lambda_U(a,x)=1$ for the crossing-free unknot diagram and satisfying
\begin{align*}
    \Lambda_{D_+}(a,x)+\Lambda_{D_-}(a,x)
    &=x\bigl(\Lambda_{D_0}(a,x)+\Lambda_{D_\infty}(a,x)\bigr), \\
    \Lambda_{D^+}(a,x)&=a\Lambda_D(a,x), \\
    \Lambda_{D^-}(a,x)&=a^{-1}\Lambda_D(a,x),
\end{align*}
where $D^+$ and $D^-$ are obtained by adding a positive and a negative
curl, respectively. For an oriented diagram $D$ of a link $L$, the
Kauffman polynomial is
\[
    F_L(a,x)=a^{-w(D)}\Lambda_D(a,x),
\]
where $w(D)$ is the writhe. This normalization makes $F_L$ invariant under
all Reidemeister moves.

\begin{figure}
    \begin{subfigure}{0.24\textwidth}
        \centering
        \crossing
        \caption{$L_+$}
    \end{subfigure}
    \begin{subfigure}{0.24\textwidth}
        \centering
        \crossingflip
        \caption{$L_-$}
    \end{subfigure}
    \begin{subfigure}{0.24\textwidth}
        \centering
        \crossingzero
        \caption{$L_0$}
    \end{subfigure}
    \begin{subfigure}{0.24\textwidth}
        \centering
        \crossinginfty
        \caption{$L_\infty$}
    \end{subfigure}
    \caption{The four link diagrams in the skein relation.}
    \label{nanka}
\end{figure}

The Jones polynomial $J_L(q)$ is obtained by setting
$a=-q^{-3/4}$ and $x=q^{1/4}+q^{-1/4}$, with an appropriate normalization,
whereas the $Q$-polynomial is obtained by setting $a=1$ for unoriented links.
This relationship is illustrated in Figure~\ref{fig:twins}.

\begin{figure}[htbp]
    \centering
    \begin{tikzpicture}[node distance=2cm, auto]
        \node (K) {Kauffman polynomial $F_L(a, x)$};
        \node (J) [below left of=K, node distance=3cm] {Jones polynomial $J_L(q)$};
        \node (Q) [below right of=K, node distance=3cm] {$Q$-polynomial $Q_L(x)$};
        
        \draw[->] (K) -- node[left, font=\small] {$a=-q^{-3/4}, x=q^{1/4}+q^{-1/4}$} (J);
        \draw[->] (K) -- node[right, font=\small] {$a=1$} (Q);
    \end{tikzpicture}
    \caption{The relationship between the Kauffman, Jones, and $Q$-polynomials.}
    \label{fig:twins}
\end{figure}

Our starting question is how the roots change when the number of twists in a
link diagram increases. We also ask whether the Mahler measure of the
corresponding polynomial has a meaningful limit. For the Jones polynomial,
Champanerkar and Kofman proved a convergence result for the Mahler measure
under twisting~\cite{10.2140/agt.2005.5.1}.

We prove that, as the number of half twists tends to infinity, all but a
bounded number of the distinct roots of the $Q$-polynomial approach the
interval $[-2,2]$. This is in contrast with the behavior of the roots of the
Jones polynomial, for which the unit circle is the natural limiting set. We also
study the Mahler measure after the change of variables
\[
    \QC_L(z)=Q_L(z+z^{-1}).
\]
We prove a convergence result for $M(\QC_L)$ under twisting that is similar
to the known result for the Jones polynomial. Silver and Williams
\cite{MR2050045} discuss possible connections between the Mahler measure of
Alexander polynomials and hyperbolic volume. For the transformed $Q$-polynomial,
we do not obtain a direct relation with hyperbolic volume; the similarity
discussed here is limited to convergence under twisting.

It is also natural to ask whether all roots of a knot polynomial are real or
lie on the unit circle. We therefore compare these properties for the
Alexander, Jones, and $Q$-polynomials, and give an infinite family of
$2$-bridge links whose $Q$-polynomials have only real nonzero roots.

The paper is organized as follows. In Section~2, we recall the definitions of
the $Q$-polynomial, the Mahler measure, and the Jones polynomial. In Section~3,
we prove the results on twisting. In Section~4, we compare real and unit-circle
roots of the Alexander, Jones, and $Q$-polynomials.

\section{Preliminaries}

\subsection{The $Q$-polynomial}

The $Q$-polynomial $Q_L(x)\in\ZZ[x^{\pm1}]$ of an unoriented link $L$
is determined by the skein relation
\[
    Q_{L_+}(x)+Q_{L_-}(x)
    =x\bigl(Q_{L_0}(x)+Q_{L_\infty}(x)\bigr)
\]
and the normalization $Q_U(x)=1$ for the unknot $U$
\cite{MR837528,Ho1985}. Here $L_+,L_-,L_0,L_\infty$ are identical outside
the portions shown in Figure~\ref{nanka}.

For an $s$-component link $L$, we will use the special values
\[
    Q_L(1)=1,
    \qquad
    Q_L(-2)=(-2)^{s-1}.
\]
The $Q$-polynomial is multiplicative under connected sum:
\[
    Q_{K_1\mathbin{\#}K_2}(x)=Q_{K_1}(x)Q_{K_2}(x).
\]

It is convenient to introduce the reciprocal rational function
\[
    \QC_L(z):=Q_L\left(z+z^{-1}\right).
\]
When $L$ is a knot, $Q_L$ has no negative powers, so $\QC_L$ is a Laurent
polynomial. For a general link, $\QC_L$ may have poles at $z=\pm i$.
The substitution $x=z+z^{-1}$ converts the interval $[-2,2]$ into the
unit circle: a number $x$ belongs to $[-2,2]$ if and only if the solutions
of $z+z^{-1}=x$ lie on the unit circle. Thus, questions about roots of
$Q_L$ near $[-2,2]$ can be translated into questions about roots of
$\QC_L$ near the unit circle.

\subsection{Mahler measure}

For a nonzero Laurent polynomial
$f\in\CC[x_1^{\pm1},\ldots,x_s^{\pm1}]$, its Mahler measure is
\[
    M(f)=\exp\left(
    \int_0^1\!\cdots\!\int_0^1
    \log\left|f(e^{2\pi i\theta_1},\ldots,e^{2\pi i\theta_s})\right|
    \,d\theta_1\cdots d\theta_s\right).
\]
For a one-variable polynomial
$f(x)=a\prod_{j=1}^d(x-\gamma_j)$, Jensen's formula
in~\cite{10.1007/BF02417878} gives
\[
    M(f)=|a|\prod_{j=1}^d\max\{1,|\gamma_j|\}.
\]
We extend the Mahler measure multiplicatively to nonzero rational functions
by setting $M(f/g)=M(f)/M(g)$. The Boyd--Lawton theorem, recalled below,
will be the main tool for passing from one-variable specializations to
multivariable Laurent polynomials.

\subsection{The Jones polynomial}

We use the normalization of the Jones polynomial $J_L(q)$ determined by
$J_U(q)=1$ and
\[
    q^{-1}J_{L_+}(q)-qJ_{L_-}(q)
    =\left(q^{1/2}-q^{-1/2}\right)J_{L_0}(q)
\]
for an oriented skein triple $(L_+,L_-,L_0)$. For a knot $K$, this
normalization satisfies the special-value identities
\cite[Section~12]{Jones1987HeckeAR}
\[
    |J_K(1)|=|J_K(i)|
    =\left|J_K\left(e^{2\pi i/3}\right)\right|=1.
\]
These three values will be used in our discussion of root stability.

\section{Mahler measure convergence and roots of the $Q$-polynomial}

For a vector $\mathbf{r}=(r_1,\ldots,r_s)\in\ZZ^s$, define
\begin{equation*}
    \nu(\mathbf{r})
    = \min\left\{\max_{1\leq j\leq s}|w_j|\mathrel{\Big|}
    \mathbf{w}=(w_1,\ldots,w_s)\in\ZZ^s\setminus\{\mathbf{0}\},
    \ \mathbf{r}\cdot\mathbf{w}=0\right\}.
\end{equation*}

\begin{lemma}[Boyd--Lawton \cite{LAWTON1983356}]\label{boydlawton}
    For any nonzero Laurent polynomial
    $f\in\CC[x_1^{\pm1},\ldots,x_s^{\pm1}]$, we have
    \begin{equation*}
        \lim_{\nu(\mathbf{r})\to\infty}
        M\bigl(f(x^{r_1},\ldots,x^{r_s})\bigr)=M(f).
    \end{equation*}
    In particular,
    \begin{equation*}
        \lim_{d\to\infty}
        M\bigl(f(x,x^d,x^{d^2},\ldots,x^{d^{s-1}})\bigr)=M(f).
    \end{equation*}
\end{lemma}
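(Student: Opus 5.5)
Since Lemma~\ref{boydlawton} is a known theorem, my plan is to outline Lawton's argument for the general limit in three steps, and then deduce the ``in particular'' clause by bounding $\nu$ from below. The steps are: an equidistribution statement, a uniform small-value estimate, and a truncation argument.

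\emph{Equidistribution.} Put $g_{\mathbf r}(x)=f(x^{r_1},\ldots,x^{r_s})$. Then
\[
\log M(g_{\mathbf r})=\int_0^1 \log\bigl|f(e^{2\pi i r_1 t},\ldots,e^{2\pi i r_s t})\bigr|\,dt=\int_{\mathbb T^s}\log|f|\,d\mu_{\mathbf r},
\]
where $\mu_{\mathbf r}$ is the push-forward of Lebesgue measure on $[0,1]$ along the closed one-parameter subgroup $t\mapsto(e^{2\pi i r_j t})_j$. The Fourier coefficient of $\mu_{\mathbf r}$ at a character $\mathbf w\in\ZZ^s$ equals $1$ if $\mathbf r\cdot\mathbf w=0$ and $0$ otherwise. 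So every nonzero $\mathbf w$ with a nonvanishing coefficient satisfies $\max_j|w_j|\ge\nu(\mathbf r)$. Hence, as $\nu(\mathbf r)\to\infty$, each fixed nonzero Fourier coefficient eventually vanishes. Therefore $\mu_{\mathbf r}$ converges weakly to Haar measure on $\mathbb T^s$, and $\int\phi\,d\mu_{\mathbf r}\to\int\phi$ for every continuous $\phi$.

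\emph{Small-value estimate (main obstacle).} The function $\log|f|$ is not continuous, since it tends to $-\infty$ on the zero set of $f$. Weak convergence is therefore not enough, and we need uniform integrability near the zeros. The key input, following Lawton, is this. For a one-variable polynomial with at most $k$ nonzero terms and leading coefficient of modulus at least a fixed constant, the set $\{t:|g(e^{2\pi i t})|<\delta\}$ has measure at most $C_k\,\delta^{c_k}$. Here $C_k$ and $c_k>0$ depend only on $k$ and not on the degree. Each $g_{\mathbf r}$ has at most as many terms as $f$. Once $\nu(\mathbf r)$ is large, the monomials of $f$ map to distinct exponents, so no cancellation occurs and the coefficients of $g_{\mathbf r}$ are exactly those of $f$. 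This yields a bound on $\int_{\{|g_{\mathbf r}|<\delta\}}\bigl|\log|g_{\mathbf r}|\bigr|\,dt$ that is uniform in $\mathbf r$ and tends to $0$ as $\delta\to0$. I expect proving this degree-independent small-value lemma to be the hardest step. I would try first an induction on $k$, using a derivative/Remez-type argument.

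\emph{Truncation.} Replace $\log|f|$ by the continuous function $\max\{\log|f|,\log\delta\}$ and apply weak convergence to it. The error incurred on both sides is controlled by the previous estimate, and by the corresponding integrability of $\log|f|$ for Haar measure. Letting first $\nu(\mathbf r)\to\infty$ and then $\delta\to0$ gives the general limit.

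For the particular case, take $\mathbf r=(1,d,\ldots,d^{s-1})$. It suffices to show $\nu(\mathbf r)\ge d$. Suppose $\mathbf w\neq\mathbf 0$ satisfies $\sum_j w_jd^{j-1}=0$ and $\max_j|w_j|\le d-1$. Let $k$ be the largest index with $w_k\neq0$. Then
\[
d^{k-1}\le|w_kd^{k-1}|=\Bigl|\sum_{j<k}w_jd^{j-1}\Bigr|\le(d-1)\sum_{j<k}d^{j-1}=d^{k-1}-1,
\]
which is a contradiction. Hence $\nu(\mathbf r)\ge d\to\infty$ as $d\to\infty$, and the first statement applies.
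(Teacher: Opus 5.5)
The paper gives no proof of this lemma. It is quoted from Lawton and used as a black box, and the second display (the case $\mathbf r=(1,d,\ldots,d^{s-1})$) is stated without comment. You instead reconstruct the structure of Lawton's proof, and the reconstruction is sound:
\begin{itemize}
\item A trigonometric polynomial whose frequencies satisfy $\max_j|w_j|<\nu(\mathbf r)$ integrates against $\mu_{\mathbf r}$ to its constant term. So the weak convergence is uniform over all $\mathbf r$ with $\nu(\mathbf r)$ beyond a threshold.
\item A collision $\mathbf r\cdot(\mathbf a-\mathbf b)=0$ between two exponents of $f$ forces $\nu(\mathbf r)\le\max_j|a_j-b_j|$. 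So for large $\nu(\mathbf r)$ the nonzero coefficients of $g_{\mathbf r}$ are exactly those of $f$. This also makes harmless the fact that your constant $C_k$ depends on the lower bound for the leading coefficient.
\item The layer-cake formula converts $m\{|g_{\mathbf r}|<\eta\}\le C\eta^{c}$ into $\int_{\{|g_{\mathbf r}|<\delta\}}\log(\delta/|g_{\mathbf r}|)\,dt\le C\delta^{c}/c$. This is exactly the uniform tail control the truncation needs.
\end{itemize}
Your check that $\nu(1,d,\ldots,d^{s-1})\ge d$ is correct, and it supplies the deduction of the second display that the paper leaves implicit.

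The only ingredient you leave open is the degree-independent sublevel estimate for sparse polynomials. That is the main theorem of Lawton's paper, so your outline rests on Lawton at the same point the paper does, while making visible what is actually used.

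If you want to close it yourself, the induction you suggest works. Take $h(t)=\sum_{j=0}^{k}a_je^{2\pi i n_jt}$ with $k+1$ terms, $n_0<\cdots<n_k$ and $a_k=1$; the case $k=1$ is elementary, so let $k\ge2$. Put $H(t)=e^{-2\pi i n_0t}h(t)$ and $N=n_k-n_0$.
\begin{itemize}
\item $H'/(2\pi i N)$ is again monic with $k$ terms. By induction, $\{|H'|<2\pi N\eta\}$ has measure at most $C_{k-1}\eta^{1/(k-1)}$.
\item Off this set, $\{|H|<\delta\}$ splits into $O(N)$ intervals, since all sets involved are sublevel sets of real trigonometric polynomials of degree $O(N)$. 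On each interval, $\operatorname{Re}H$ or $\operatorname{Im}H$ is monotone with derivative of modulus at least $\sqrt2\,\pi N\eta$ while staying in $(-\delta,\delta)$. Hence the total length is $O(\delta/\eta)$.
\item The factor $N$ cancels. Taking $\eta=\delta^{(k-1)/k}$ gives $C_k\delta^{1/k}$, independent of the degree.
\end{itemize}
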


The same conclusion holds for a nonzero rational function: write it as a
quotient of two Laurent polynomials and apply Lemma~\ref{boydlawton} to the
numerator and the denominator.

\begin{theorem}\label{mainthone}
    Let $L_m$ be a link obtained by inserting $m$ half twists into a pair of
    parallel strands in a diagram of an $s$-component link $L$. Then there is
    a polynomial $f\in\ZZ[x^{\pm 1},t^{\pm 1}]$ such that
    \begin{equation}
        \lim_{m\to \infty}M(\QC_{L_m}(x))=M(f(x,t)).
    \end{equation}
\end{theorem}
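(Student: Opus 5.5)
The plan is to derive a linear recurrence in $m$ from the skein relation, solve it in closed form, and then apply Lemma~\ref{boydlawton}.

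\textbf{Step 1: the recurrence.} Apply the skein relation at one crossing of the twist region. Writing $Q_m:=Q_{L_m}(x)$, this gives
\[
Q_{m+1}+Q_{m-1}=x\bigl(Q_{L_0}+Q_{L_\infty}\bigr).
\]
One smoothing simply deletes the crossing. The other smoothing turns the remaining $m$ twists into curls, and since $a=1$ those curls do not change $Q$. So one of the two terms is $Q_m$ and the other is a fixed polynomial $R:=Q_{L_\infty}$, independent of $m$. This gives
\[
Q_{m+1}-xQ_m+Q_{m-1}=xR.
\]
This is a linear inhomogeneous recurrence with characteristic polynomial $\lambda^2-x\lambda+1$. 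Substituting $x=z+z^{-1}$, its roots are $z$ and $z^{-1}$.

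\textbf{Step 2: the closed form.} Away from $z=\pm1$, a particular solution is the constant $xR/(2-x)$. The general solution is therefore
\[
\QC_{L_m}(z)=A(z)z^m+B(z)z^{-m}+C(z),
\]
where
\[
C(z)=\frac{(z+z^{-1})R(z+z^{-1})}{2-z-z^{-1}}=-\frac{(z^2+1)\,R}{(z-1)^2}.
\]
The coefficients $A$ and $B$ are rational functions in $z$, determined by $Q_0$ and $Q_1$. Explicitly, $A$ and $B$ are obtained by solving a $2\times2$ system with determinant $z^{-1}-z$. Each of $A$, $B$, $C$ is then of the form $P(z)/\bigl(z^k(z-1)^2(z+1)(z^2+1)^N\bigr)$ with $P\in\ZZ[z^{\pm1}]$. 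Clearing this common denominator $D(z)$, which does not depend on $m$, gives
\[
D(z)\,\QC_{L_m}(z)=g(z,z^m),
\qquad
g(x,t)=a(x)t+b(x)t^{-1}+c(x)\in\ZZ[x^{\pm1},t^{\pm1}].
\]
Since $M$ is multiplicative,
\[
M(\QC_{L_m})=M\bigl(g(z,z^m)\bigr)/M(D).
\]

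\textbf{Step 3: passing to the limit.} Take
\[
f(x,t):=g(x,t)/D(x),
\]
or, to get a genuine Laurent polynomial, absorb $D$ using the fact that $M(D)$ is constant. Since $D$ has only cyclotomic factors and a monomial factor, $M(D)=1$, so $f=g$ works. For $\mathbf r=(1,m)$, the integer vectors orthogonal to $\mathbf r$ are multiples of $(m,-1)$, hence $\nu(\mathbf r)=m\to\infty$. Lemma~\ref{boydlawton} then gives
\[
M\bigl(g(z,z^m)\bigr)\to M(g).
\]
A separate check is needed that $g\neq0$. If $a\equiv b\equiv c\equiv 0$, then $Q_m$ would vanish identically, which contradicts $Q_{L_m}(1)=1$.

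\textbf{Main obstacle.} The hardest step is the bookkeeping in Step 1. Depending on whether the twists run parallel or antiparallel to the chosen smoothing direction, the roles of $L_0$ and $L_\infty$ swap, and one must confirm that the curl term really is independent of $m$; this is where $a=1$ is used. One must also handle the degenerate characteristic roots $z=\pm1$ and the possible poles at $z=\pm i$ for links. These affect only finitely many points, do not change Mahler measures, and are absorbed into the cyclotomic denominator. If $m$ runs over both parities, the same argument applies, with a single recurrence covering all $m$.
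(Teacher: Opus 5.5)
Your proof is correct and follows essentially the same route as the paper: a skein recurrence in $m$, the closed form $C_0+C_1z^m+C_2z^{-m}$, clearing the cyclotomic denominator $(z^2+1)^N(z+1)(z-1)^2$ (up to a monomial), and Boyd--Lawton with $\mathbf r=(1,m)$. The only difference is cosmetic. You keep the second-order inhomogeneous recurrence $Q_{m+1}-xQ_m+Q_{m-1}=xQ_{L_\infty}$, whereas the paper differences it into the homogeneous third-order recurrence with characteristic polynomial $(\lambda-1)(\lambda-x)(\lambda-x^{-1})$. Your version has the small advantage of making explicit why the curl term is independent of $m$, and it identifies $C_0=-(z^2+1)\QC_{L_\infty}(z)/(z-1)^2$.
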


\begin{proof}
    Applying the skein relation successively at the last crossing in the twist
    region gives the recurrence relation
    \begin{equation}
        \QC_{L_m}(x) - (x+1+\frac{1}{x})\QC_{L_{m-1}}(x) + (x+1+\frac{1}{x})\QC_{L_{m-2}}(x) - \QC_{L_{m-3}}(x) = 0
    \end{equation}
    Its characteristic polynomial factors as
    \[
        \lambda^3-\left(x+1+x^{-1}\right)\lambda^2
        +\left(x+1+x^{-1}\right)\lambda-1
        =(\lambda-1)(\lambda-x)(\lambda-x^{-1}).
    \]
    Therefore, we can express $\QC_{L_m}(x)$ as
    \begin{equation*}
        \QC_{L_m}(x) = C_0 + C_1 x^m +C_2 x^{-m},
    \end{equation*}
    where $C_i\in\QQ(x)$.
    Set
    \[
        F(x,t):=C_0+C_1t+C_2t^{-1}.
    \]
    Then $\QC_{L_m}(x)=F(x,x^m)$, and the rational-function version of
    Lemma~\ref{boydlawton} gives
    \[
        \lim_{m\to\infty}M(\QC_{L_m}(x))=M(F(x,t)).
    \]
    Let $D(x)=(x^2+1)^s(x+1)(x-1)^2$ and set $f(x,t)=D(x)F(x,t)$.
    By Lemma~\ref{mainlemma}, $f\in\ZZ[x^{\pm1},t^{\pm1}]$; moreover,
    $M(F)=M(f)$ since $M(D)=1$. This proves the assertion.
\end{proof}

\begin{corollary}\label{maincor}
    Let $\mathbf{m} =(1, m_1,\ldots,m_\ell)\in\ZZ^{\ell+1}$.
    Let $L_\mathbf{m}$ be a link constructed by inserting
    $m_1,\ldots,m_\ell$ half twists into $\ell$ pairs of parallel strands
    in a diagram of a link $L$. Then there is a polynomial
    $f\in\ZZ[x^{\pm 1},t_1^{\pm 1},\ldots,t_\ell^{\pm 1}]$ such that
    \begin{equation}
        \lim_{\nu(\mathbf{m})\to \infty}M(\QC_{L_\mathbf{m}}(x))
        =M(f(x,t_1,\ldots,t_\ell)).
    \end{equation}
\end{corollary}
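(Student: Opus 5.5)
The plan is to extend the proof of Theorem~\ref{mainthone} one twist region at a time and then apply the multivariable Boyd--Lawton lemma. Fix the twist regions $R_1,\ldots,R_\ell$ in the diagram. Regard $m_2,\ldots,m_\ell$ as fixed and vary $m_1$. The skein relation at the last crossing of $R_1$ gives the same third-order linear recurrence as in the proof of Theorem~\ref{mainthone}, with characteristic roots $1,x,x^{-1}$. The recurrence is local to $R_1$, so its coefficients do not depend on the other regions. Hence
\[
\QC_{L_\mathbf{m}}(x)=\sum_{\epsilon_1\in\{-1,0,1\}} C_{\epsilon_1}(x;m_2,\ldots,m_\ell)\,x^{\epsilon_1 m_1},
\]
where each $C_{\epsilon_1}$ is obtained by solving a $3\times 3$ Vandermonde-type system. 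The inputs to this system are the values of $\QC$ at three fixed base twist numbers in $R_1$.

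Each such base value is itself the $\QC$-polynomial of a link in which $R_2,\ldots,R_\ell$ still carry $m_2,\ldots,m_\ell$ half twists. Applying the same argument to $R_2$, then $R_3$, and so on, gives by induction on $\ell$
\[
\QC_{L_\mathbf{m}}(x)=\sum_{\epsilon\in\{-1,0,1\}^\ell} C_\epsilon(x)\,x^{\epsilon_1m_1+\cdots+\epsilon_\ell m_\ell},
\]
with every $C_\epsilon\in\QQ(x)$ independent of $\mathbf{m}$. The order in which the regions are treated is irrelevant; one only needs that the operations in different regions commute. Set
\[
F(x,t_1,\ldots,t_\ell)=\sum_\epsilon C_\epsilon(x)\,t_1^{\epsilon_1}\cdots t_\ell^{\epsilon_\ell}.
\]
Then $\QC_{L_\mathbf{m}}(x)=F(x,x^{m_1},\ldots,x^{m_\ell})$, which is exactly $F$ evaluated at $(x^{1},x^{m_1},\ldots,x^{m_\ell})$ with exponent vector $\mathbf{m}=(1,m_1,\ldots,m_\ell)$.

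Next, clear denominators. The Vandermonde determinant in each region is built from the differences $(1-x)$, $(1-x^{-1})$, $(x-x^{-1})$, so it contributes factors $(x-1)^2(x+1)$ up to monomials. The $\QC$-polynomials of the base links may have poles only at $x=\pm i$, of order bounded by the number of components, which is at most $s+\ell$. Multiplying by
\[
D(x)=(x^2+1)^{N}\bigl((x+1)(x-1)^2\bigr)^{\ell}
\]
for suitable $N$ gives $f=DF\in\ZZ[x^{\pm1},t_1^{\pm1},\ldots,t_\ell^{\pm1}]$, in the same way Lemma~\ref{mainlemma} does for $\ell=1$. Since $D$ is a product of cyclotomic polynomials, $M(D)=1$.

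Finally, apply Lemma~\ref{boydlawton} in its rational-function form, separately to the numerator $f$ and to the denominator $D$ in $\ell+1$ variables. As $\nu(\mathbf{m})\to\infty$ this gives $M(\QC_{L_\mathbf{m}})\to M(f)/M(D)=M(f)$.

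The main obstacle is integrality. One must verify that the iterated coefficients $C_\epsilon$ have denominators dividing $D$, and that no cancellation makes $F$ identically zero; $F$ is nonzero because $\QC_{L_\mathbf{m}}(1)=Q(2)\ne0$ in general, or at least because $\QC_{L_\mathbf{m}}$ itself is nonzero. I would handle integrality by applying the $\ell=1$ statement of Lemma~\ref{mainlemma} inductively, treating the other $t_j$ as coefficients. Failing that, integrality is not actually needed for the limit: one can take $f$ to be $F$ times a common denominator $D$ and note that $M(D)$ cancels in the limit, since Boyd--Lawton applies to $D$ regarded as a polynomial in $x$ alone.
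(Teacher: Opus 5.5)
Your proposal is correct and follows essentially the same route as the paper. You treat the twist sites one at a time with the three-term recurrence (roots $1,x,x^{-1}$) to obtain $F\in\QQ(x)[t_1^{\pm1},\ldots,t_\ell^{\pm1}]$ with $\QC_{L_\mathbf{m}}(x)=F(x,x^{m_1},\ldots,x^{m_\ell})$, clear the denominators built from $x^2+1$, $x+1$, $x-1$ as in Lemma~\ref{mainlemma}, and apply the rational-function form of Lemma~\ref{boydlawton} with $M(D)=1$.

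There are two harmless slips. First, the $3\times3$ determinant is $(x-1)^3(x+1)$ up to a monomial, not $(x-1)^2(x+1)$; a factor $x-1$ cancels against the cofactors, which is why Lemma~\ref{mainlemma} needs only $(x-1)^2$. Second, $Q_L(2)$ can vanish (for instance, for the two-component unlink), so $F\not\equiv0$ is best deduced from $\QC_{L_\mathbf{m}}(e^{\pi i/3})=Q_{L_\mathbf{m}}(1)=1$.
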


\begin{proof}
    We apply the argument successively to the $\ell$ twist sites. At each site,
    the recurrence used in the proof of Theorem~\ref{mainthone} expresses the
    dependence on the corresponding twist parameter as a linear combination of
    $1,t_j,t_j^{-1}$.
    Induction on the number of twist sites therefore gives
    \[
        F(x,t_1,\ldots,t_\ell)
        \in\QQ(x)[t_1^{\pm1},\ldots,t_\ell^{\pm1}]
    \]
    such that
    \[
        \QC_{L_\mathbf{m}}(x)=F(x,x^{m_1},\ldots,x^{m_\ell}).
    \]
    The denominators introduced at each step are products of powers of
    $x^2+1$, $x+1$, and $x-1$, as in Lemma~\ref{mainlemma}. Hence there is a
    polynomial $D(x)$, all of whose irreducible factors are among these three
    polynomials, such that
    \[
        f(x,t_1,\ldots,t_\ell):=D(x)F(x,t_1,\ldots,t_\ell)
        \in\ZZ[x^{\pm1},t_1^{\pm1},\ldots,t_\ell^{\pm1}].
    \]
    Since $M(D)=1$, we have $M(F)=M(f)$.
    The rational-function version of Lemma~\ref{boydlawton} therefore gives
    \[
        \lim_{\nu(\mathbf{m})\to\infty}M(\QC_{L_\mathbf{m}}(x))
        =M(F)=M(f).
    \]
\end{proof}

\begin{theorem}\label{mainthtwo}
    Write $\QC_{L_m}(x) = C_0 + C_1 x^m +C_2 x^{-m}$ as in the proof of
    Theorem~\ref{mainthone}, and assume that $C_1$ and $C_2$ are not both zero.
    Let $L_m$ and $L_\mathbf{m}$ be as in Theorem~\ref{mainthone} and Corollary~\ref{maincor}.
    Suppose $L$ is an $s$-component link.
    Let $\Gamma_L(m)$ be the set of distinct nonzero roots of
    $(x+\frac{1}{x})^s\QC_{L_m}(x)$.
    Then we have
    \begin{enumerate}
        \item $\lim_{m\to\infty}\#\Gamma_L(m)=\infty$,
        \item For every $\epsilon>0$,
        \[
            \limsup_{m\to\infty}
            \#\{\gamma\in\Gamma_L(m)\mid ||\gamma|-1|>\epsilon\}<\infty.
        \]
    \end{enumerate}
    For the family $L_\mathbf{m}$, assume that for some $i$ and some fixed
    values of the other twist parameters, the resulting one-parameter family,
    obtained by varying $m_i$, has coefficients $C_1$ and $C_2$ that are not
    both zero. Let $\Gamma_L(\mathbf{m})$ be the analogous set of distinct
    nonzero roots of
    \[
        (x+x^{-1})^{s+\ell-1}\QC_{L_\mathbf{m}}(x).
    \]
    The same conclusions hold with $m\to\infty$ replaced by
    $\nu(\mathbf{m})\to\infty$.
\end{theorem}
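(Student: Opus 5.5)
Multiply through by the common denominator $D(x)$ from the proof of Theorem~\ref{mainthone}. We get, up to the harmless factors $(x^2+1)$, $(x\pm1)$ and powers of $x$,
\[
    P_m(x):=x^{m}f(x,x^m)=x^m c_0(x)+c_1(x)x^{2m}+c_2(x),
\]
where $f=c_0+c_1t+c_2t^{-1}$ with $c_i\in\ZZ[x^{\pm1}]$ independent of $m$. The prefactor $(x+x^{-1})^s$ in the definition of $\Gamma_L(m)$ is chosen exactly to cancel the possible poles at $\pm i$. Hence $\Gamma_L(m)$ agrees with the set of distinct nonzero roots of $P_m$, up to a set of size bounded independently of $m$, namely the roots of the fixed factors $x^2+1$ and $x\pm1$. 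Both assertions therefore reduce to statements about the roots of $P_m$, a trinomial-type sum whose coefficients are fixed Laurent polynomials.

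For (2), I would use a Rouché / dominant-term argument. Fix $\epsilon>0$ and assume first that $c_1\neq0$. Take the annulus region $|x|\ge 1+\epsilon$, cut off at a large radius, and avoid small discs around the finitely many roots of $c_1$. There $|c_1x^{2m}|$ dominates $|c_0x^m|+|c_2|$ once $m$ is large, since the ratio decays like $(1+\epsilon)^{-m}$ times a bounded factor. On that region $P_m$ therefore has the same number of zeros as $c_1x^{2m}$, which has none off the roots of $c_1$. Near infinity, compare degrees directly: the roots of large modulus are controlled by the leading coefficient of $c_1$, and for large $m$ no roots of $P_m$ escape beyond a fixed radius.

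The delicate point is the roots near zeros of $c_1$ with $|x|>1+\epsilon$. By Hurwitz's theorem applied to $x^{-2m}P_m\to c_1$ locally uniformly on $|x|>1+\epsilon/2$, the number of roots of $P_m$ in that region is eventually at most the number of zeros of $c_1$ there, counted with multiplicity. This is bounded independently of $m$.

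The same argument handles $|x|<1-\epsilon$, using $P_m\to c_2$ there. If $c_1=0$ (or $c_2=0$), then $P_m=x^mc_0+c_2$, and the same argument applies with $c_0$ playing the role of the dominant coefficient outside the circle. If both $c_1$ and $c_2$ vanish, the statement is excluded by hypothesis.

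For (1), $\deg P_m$ grows linearly in $m$ because $c_1$ or $c_2$ is nonzero. By (2), all but boundedly many of the roots lie in $1-\epsilon\le|x|\le1+\epsilon$. The issue is multiplicity: I must show that the number of distinct roots also tends to infinity. First I would try a Mason–Stothers ($abc$) argument for polynomials. A repeated root of $P_m$ is a common root of $P_m$ and $P_m'$, and eliminating the $x^{2m}$ term between them yields a relation
\[
    x^m\bigl(\ldots\bigr)=\bigl(\text{fixed polynomial in }c_i,c_i'\text{ and }m\bigr)
\]
of bounded degree, unless it degenerates identically. So the number of repeated roots, with multiplicity, is $O(1)$. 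The degenerate identical case would force $P_m$ to be essentially a perfect power, which contradicts $Q_L(1)=1$ or a degree comparison.

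The multivariable case follows by choosing the index $i$ and the other parameters from the hypothesis, or more carefully by fixing all but one parameter and letting $\nu(\mathbf{m})\to\infty$. This uses Corollary~\ref{maincor}'s representation $F(x,x^{m_1},\dots,x^{m_\ell})$, and the same dominant-monomial argument applied to the Newton polytope in $t$.

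I expect the main obstacle to be the distinctness of roots in (1), including ruling out the degenerate identical case, and the uniformity in the multiparameter limit.
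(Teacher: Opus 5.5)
\textbf{Verdict: genuine gap in part (1).} Part (2) is fine; the step that fails is the control of root multiplicities in (1).

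\textbf{Eliminating $x^{2m}$ does not give a bounded-degree relation.} Write $T=x^m$. Eliminating $T^2$ between $P_m$ and $xP_m'$ gives $AT+B=0$, where $A=x(c_1'c_0-c_1c_0')+mc_0c_1$ and $B=x(c_1'c_2-c_1c_2')+2mc_1c_2$. The polynomial $x^mA+B$ has degree growing with $m$, so it bounds nothing. You must eliminate $T$ completely, via the resultant $R_m=c_1B^2-c_0AB+c_2A^2=A^2P_m-(AT+B)(c_1AT+c_0A-c_1B)$. This identity shows that a nonzero root of $P_m$ of multiplicity $k$ is a root of $R_m$ of multiplicity at least $k-1$. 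Hence the total excess multiplicity is at most $\deg R_m=O(1)$, \emph{provided} $R_m\not\equiv0$.

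\textbf{The non-degeneracy is left open.} Your remark that a ``perfect power contradicts $Q_L(1)=1$ or a degree comparison'' does not settle it. The $m^2$-coefficient of $R_m$ is $c_1^2c_2(4c_1c_2-c_0^2)$, so you need two facts.
\begin{enumerate}
\item $c_1c_2\neq0$. This follows from the hypothesis together with $\QC_{L_m}(x^{-1})=\QC_{L_m}(x)$, which forces $C_2(x)=C_1(x^{-1})$.
\item $c_0^2\neq4c_1c_2$. This holds because $C_0(\zeta)=1$ and $C_1(\zeta)=C_2(\zeta)=0$ at $\zeta=e^{\pi i/3}$, by $Q_{L_m}(1)=1$ and linear independence of $1,\zeta^m,\zeta^{-m}$.
\end{enumerate}
Similar input is needed for your claim that the number of nonzero roots grows. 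The condition ``$c_1$ or $c_2$ nonzero'' alone still allows $P_m=c_1x^{2m}$.

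\textbf{The multi-parameter case of (1) is not covered.} Your elimination relies on $P_m$ being quadratic in $x^m$. It does not extend to $\sum_{\mathbf{k}}c_{\mathbf{k}}x^{\mathbf{k}\cdot\mathbf{m}}$ with up to $3^\ell$ terms. Fixing all but one parameter is incompatible with $\nu(\mathbf{m})\to\infty$.

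\textbf{What the paper uses instead.} The paper uses Haj\'os's lemma (Schinzel, p.~187, Lemma~1): a polynomial with a nonzero root of multiplicity $n$ has at least $n+1$ nonzero coefficients. The function $g_m=(x^2+1)^s(x+1)(x-1)^2\QC_{L_m}$ is a specialization of one fixed Laurent polynomial. So its number of terms is bounded by some $M$ independent of $m$ (or of $\mathbf{m}$), and every nonzero root has multiplicity at most $M-1$. Hence $g_m$ has at least $\operatorname{br}(g_m)/(M-1)$ distinct nonzero roots. Since $\operatorname{br}(g_m)\to\infty$ (using $C_0\neq0$, from $C_0(\zeta)=1$), this gives (1) in both settings with no degeneracy analysis.

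\textbf{Part (2).} Your Rouch\'e/Hurwitz proof is sound, with a subsequence argument on the dominant monomial when $\nu(\mathbf{m})\to\infty$. It is a genuine alternative to the paper's proof. The paper instead uses Jensen's formula $(1+\epsilon)^{\#A_\epsilon(m)}\le M(P_m)$, the boundedness of $M(P_m)=M(\QC_{L_m})$ from Theorem~\ref{mainthone} and Corollary~\ref{maincor}, and the symmetry $\gamma\mapsto\gamma^{-1}$. Your route avoids Mahler measure and gives an $\epsilon$-independent bound on the limsup. The paper's route is shorter and needs no dominant-term case analysis.
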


\begin{proof}
    First, we prove (1). We define
    \begin{equation}\label{gx}
        g_m(x):=(x^2+1)^s(x+1)(x-1)^2\QC_{L_m}(x) = C_0'+C_1'x^m+C_2'x^{-m}.
    \end{equation}
    Here, $C_i'\in\ZZ[x^{\pm 1}]$ by Lemma~\ref{mainlemma}.
    There is a positive integer $M$ that bounds the sum of the absolute values
    of the coefficients of $g_m(x)$ for every $m$.
    By Lemma 1 on p.~187 of \cite{Schinzel_2000}, any polynomial with
    a root $\gamma\neq 0$ of multiplicity $n$ 
    has at least $n + 1$ nonzero coefficients.
    Thus, $n + 1 \leq M$.
    For a nonzero Laurent polynomial $h$, let
    \[
        \operatorname{br}(h)
        :=\max\operatorname{supp}(h)-\min\operatorname{supp}(h)
    \]
    denote its breadth. Since $\QC_{L_m}(x^{-1})=\QC_{L_m}(x)$, we have
    \[
        g_m(x^{-1})=x^{-(2s+3)}g_m(x).
    \]
    Thus, the support of $g_m$ is symmetric about $(2s+3)/2$.
    We also have $C_0'\neq0$. Indeed, let $\zeta=e^{\pi i/3}$. Since
    $\zeta+\zeta^{-1}=1$ and $Q_{L_m}(1)=1$ for every $m$, we have
    \[
        C_0(\zeta)+C_1(\zeta)\zeta^m+C_2(\zeta)\zeta^{-m}=1
    \]
    for every $m$. The three sequences $1,\zeta^m,\zeta^{-m}$ are linearly
    independent, and hence $C_0(\zeta)=1$. In particular, $C_0$ and $C_0'$
    are nonzero. Since $C_1'$ and $C_2'$ are not both zero, the support in
    equation~(\ref{gx}) contains, for all sufficiently large $m$, both a fixed
    exponent and an exponent whose absolute value tends to infinity. Therefore,
    \[
        \operatorname{br}(g_m)\longrightarrow\infty
        \qquad (m\to\infty).
    \]

    After multiplying $g_m$ by a suitable power of $x$, we obtain a polynomial
    of degree $\operatorname{br}(g_m)$ with a nonzero constant term. Since the
    multiplicity of each of its roots is at most $M-1$, we have
    \[
        \#\{\text{distinct nonzero roots of }g_m\}
        \geq\frac{\operatorname{br}(g_m)}{M-1}.
    \]
    Moreover, if
    \[
        P_m(x):=(x+x^{-1})^s\QC_{L_m}(x),
    \]
    then
    \[
        g_m(x)=x^s(x+1)(x-1)^2P_m(x).
    \]
    Thus the nonzero roots of $g_m$ that are not in $\Gamma_L(m)$ belong to
    the fixed set $\{-1,1\}$. Since $\operatorname{br}(g_m)\to\infty$, it
    follows that $\#\Gamma_L(m)\to\infty$, proving (1).

    Let us prove (2). It is enough to consider $0<\epsilon<1$. We set
    \begin{align*}
        A_\epsilon(m)&:=\{\alpha\in\Gamma_L(m)\mid |\alpha|\geq1+\epsilon\},\\
        B_\epsilon(m)&:=\{\beta\in\Gamma_L(m)\mid |\beta|\leq1-\epsilon\}.
    \end{align*}
    After multiplying $P_m$ by a suitable power of $x$, we may regard it as
    an integral polynomial with a nonzero constant term; this multiplication
    does not change its nonzero roots or its Mahler measure. In the following
    product, roots are counted with multiplicity, whereas the elements of
    $A_\epsilon(m)$ are counted without multiplicity. Jensen's formula gives
    \begin{equation*}
        (1+\epsilon)^{\#A_\epsilon(m)}
        \leq \prod_{|\alpha|>1}|\alpha|
        \leq M(P_m).
    \end{equation*}
    Here the last inequality follows because the absolute value of the leading
    coefficient of an integral polynomial is at least one. Furthermore,
    \[
        M(P_m)=M(\QC_{L_m}),
    \]
    since $M(x+x^{-1})=1$. Theorem~\ref{mainthone} therefore shows that
    $M(P_m)$ is bounded as $m\to\infty$.
    Hence, $\#A_\epsilon(m)$ is bounded as $m\to\infty$.
    Since $1/\gamma\in\Gamma_L(m)$ whenever $\gamma\in\Gamma_L(m)$, and
    \[
        |\gamma|\leq1-\epsilon
        \quad\Longrightarrow\quad
        |\gamma^{-1}|\geq\frac{1}{1-\epsilon}>1+\epsilon,
    \]
    $\#B_\epsilon(m)$ is also bounded as $m\to\infty$.
    Thus, the limit superior of each of $\#A_\epsilon(m)$ and
    $\#B_\epsilon(m)$ is finite, which proves (2).

    For $L_\mathbf{m}$, let $f$ be the polynomial in
    Corollary~\ref{maincor}. By the additional assumption, its support contains
    two terms with different twist-variable exponents. Since $f$ has finite
    support, the condition $\nu(\mathbf{m})\to\infty$ implies that these terms
    remain distinct after specialization and that the difference of their
    degrees tends to infinity. Hence the breadth tends to infinity, and the
    rest of the proof is the same as in the one-parameter case.
\end{proof}

\begin{remark}
    Theorem~\ref{mainthtwo} implies that, except for a uniformly bounded
    number of distinct roots, the roots of $\QC_{L_m}(x)$ approach the unit
    circle in the complex plane as $m\to\infty$. Consequently, except for a
    uniformly bounded number of distinct roots, the roots of $Q_{L_m}(x)$
    approach the interval $[-2,2]$ and hence the real axis as $m\to\infty$.
\end{remark}

\begin{lemma}\label{mainlemma}
    For an $s$-component link $L$, let $C_i$ be defined by
    \begin{equation*}
        \QC_{L_m}(x) = C_0 + C_1 x^m +C_2 x^{-m}
    \end{equation*}
    as in the proof of Theorem~\ref{mainthone}. Then
    $(x^2+1)^s(x+1)(x-1)^2C_i\in\ZZ[x^{\pm 1}]$.
\end{lemma}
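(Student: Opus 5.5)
We solve for the coefficients $C_0,C_1,C_2$ from three consecutive values of the sequence. Since $\QC_{L_m}(x)=C_0+C_1x^m+C_2x^{-m}$ for all $m$, the values at $m=0,1,2$ give a linear system
\[
\begin{pmatrix}1&1&1\\ 1&x&x^{-1}\\ 1&x^2&x^{-2}\end{pmatrix}
\begin{pmatrix}C_0\\ C_1\\ C_2\end{pmatrix}
=\begin{pmatrix}\QC_{L_0}\\ \QC_{L_1}\\ \QC_{L_2}\end{pmatrix}.
\]
This is a Vandermonde system in the nodes $1,x,x^{-1}$, with determinant $\pm(x-1)(x^{-1}-1)(x^{-1}-x)$. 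Up to a unit in $\ZZ[x^{\pm1}]$, this equals $(x-1)^2(x+1)$. By Cramer's rule, each $C_i$ is a $\ZZ[x^{\pm1}]$-linear combination of $\QC_{L_0},\QC_{L_1},\QC_{L_2}$, divided by $(x-1)^2(x+1)$.

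In fact the division is explicit via Lagrange interpolation:
\[
C_0=\frac{\QC_{L_2}-(x+x^{-1})\QC_{L_1}+\QC_{L_0}}{(1-x)(1-x^{-1})},
\]
and $C_1,C_2$ have analogous formulas with denominators $(x-1)(x-x^{-1})$ and $(x^{-1}-1)(x^{-1}-x)$. After clearing powers of $x$, each of these divides $(x-1)^2(x+1)$. The recurrence in the proof of Theorem~\ref{mainthone} holds for all integer $m$, so the same argument works wherever we start the twist count. We only need the values for three consecutive $m$, and we may shift indices as convenient.

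It remains to bound the denominators of the numerators $\QC_{L_j}$ themselves. The link $L_j$ has $s_j$ components, where $s_j\in\{s-1,s,s+1\}$ depending on the parity of $j$ and on whether the two strands belong to the same component. For a link $K$ with $c$ components, I would use the standard fact that $x^{c-1}Q_K(x)$ lies in $\ZZ[x]$. Equivalently, the negative powers of $Q_K$ have order at most $c-1$, which follows by induction on crossings from the skein relation and the unlink values $Q=\bigl(\frac{2}{x}-1\bigr)^{c-1}$. Substituting $x\mapsto x+x^{-1}=(x^2+1)/x$ then shows that $(x^2+1)^{c-1}\QC_K(x)$ lies in $\ZZ[x^{\pm1}]$.

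We apply this to each $L_j$ with $c\le s+1$, so that $c-1\le s$. Then $(x^2+1)^s\QC_{L_j}\in\ZZ[x^{\pm1}]$ for all $j$. Combining this with Cramer's rule gives $(x^2+1)^s(x+1)(x-1)^2C_i\in\ZZ[x^{\pm1}]$.

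The main obstacle is the component-count bound $x^{c-1}Q_K\in\ZZ[x]$. I will either quote it from Brandt--Lickorish--Millett/Ho or prove it by the usual induction. That induction runs on the number of crossings and the number of crossing changes needed to reach a descending diagram. It uses the fact that in the skein relation $L_0$ and $L_\infty$ have at most one more component than $L_\pm$, and the extra factor $x$ in the relation compensates for this. Care is needed for the case where $L_0$ or $L_\infty$ gains a component, since then its denominator increases by one power of $x$, which is exactly absorbed by the factor $x$.
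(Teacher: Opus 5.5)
Your argument is correct and is essentially the paper's proof. The paper solves the same $3\times3$ system, using the values at $m=-1,0,1$ rather than your $m=0,1,2$; this difference is immaterial because the recurrence holds for all integers $m$. It also gets $(x^2+1)^s\QC_{L_j}\in\ZZ[x^{\pm1}]$ exactly as you do, from the Brandt--Lickorish--Millett lowest-degree property $1-c$ together with the bound $c\le s+1$.

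One slip needs fixing. The Vandermonde determinant in the nodes $1,x,x^{-1}$ is $(x-1)(x^{-1}-1)(x^{-1}-x)=x^{-2}(x-1)^3(x+1)$, not $(x-1)^2(x+1)$ up to a unit. So Cramer's rule on its own only gives the weaker denominator $(x-1)^3(x+1)$. The denominator you need comes from your explicit annihilator (Lagrange) formulas, whose denominators $(1-x)(1-x^{-1})$, $(x-1)(x-x^{-1})$ and $(x^{-1}-1)(x^{-1}-x)$ are $(x-1)^2$, $(x-1)^2(x+1)$ and $(x-1)^2(x+1)$ up to units. Equivalently, the cofactors along each column of the matrix all share a factor $x-1$. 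These formulas agree with the paper's explicit expressions for $C_0,C_1,C_2$. You should therefore replace the Cramer's-rule sentence with that paragraph, or correct the determinant and note the cancellation.
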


\begin{proof}
    By Property 6 in \cite{MR837528}, the lowest degree of $Q_L(x)$ is $1-s$.
    A two-strand twist changes the number of components of the link by at most $1$.
    Hence each of $L_1,L_0,L_{-1}$ has at most $s+1$ components. Since
    \[
        x+x^{-1}=\frac{x^2+1}{x},
    \]
    the lowest-degree estimate implies that
    \[
        g_j(x):=(x^2+1)^s\QC_{L_j}(x)\in\ZZ[x^{\pm1}]
        \qquad (j=-1,0,1).
    \]
    Therefore, we have
    \begin{align*}
        C_0 + C_1 x +C_2 x^{-1} &= \frac{g_1(x)}{(x^2+1)^s}, \\
        C_0 + C_1 +C_2 &= \frac{g_{0}(x)}{(x^2+1)^s}, \\
        C_0 + C_1 x^{-1} +C_2 x &= \frac{g_{-1}(x)}{(x^2+1)^s},
    \end{align*}
    where $g_1(x),g_0(x),g_{-1}(x)\in\ZZ[x^{\pm 1}]$.
    Solving these equations, we obtain
    \begin{align*}
        C_0 &= \frac{(x^2+1)g_0(x) - x(g_1(x) + g_{-1}(x))}{(x^2+1)^s(x-1)^2}, \\
        C_1 &= \frac{x^2g_1(x)+xg_{-1}(x)-x(x+1)g_0(x)}{(x^2+1)^s(x+1)(x-1)^2}, \\
        C_2 &= \frac{xg_1(x)+x^2g_{-1}(x)-x(x+1)g_0(x)}{(x^2+1)^s(x+1)(x-1)^2}.
    \end{align*}
\end{proof}

\begin{example}
    Figure~\ref{fig:roottwist} shows the roots of the $Q$-polynomial and
    the Jones polynomial for twist knots.
\end{example}

\begin{figure}[htbp]
    \centering
    \begin{subfigure}[t]{0.48\textwidth}
        \centering
        \includegraphics[width=\textwidth]{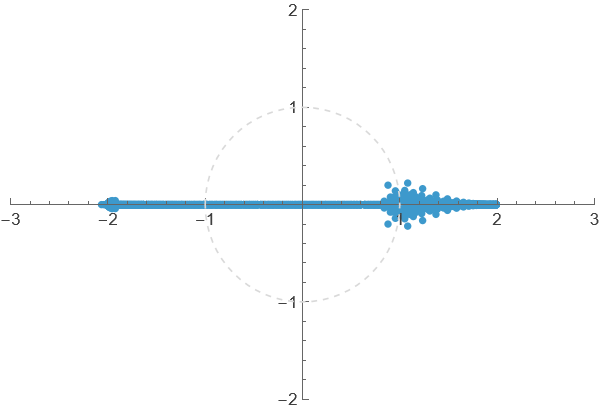}
        \caption{Roots of the $Q$-polynomial for twist knots.}
        \label{fig:Qroottwist}
    \end{subfigure}
    \hfill
    \begin{subfigure}[t]{0.48\textwidth}
        \centering
        \includegraphics[width=\textwidth]{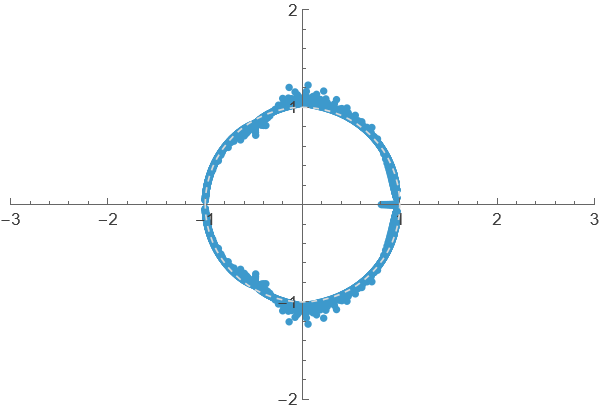}
        \caption{Roots of the Jones polynomial for twist knots.}
        \label{fig:jroottwist}
    \end{subfigure}
    \caption{Roots of the $Q$- and Jones polynomials for twist knots.}
    \label{fig:roottwist}
\end{figure}

\begin{remark}
    Champanerkar and Kofman treated full twists on an arbitrary number of
    strands for the Jones polynomial by working in the Temperley--Lieb algebra
    $TL_n$ over $\QQ(A)$~\cite{10.2140/agt.2005.5.1}. Its semisimplicity allows
    the full twist to be decomposed using mutually orthogonal minimal central
    idempotents, so that its powers can be analyzed explicitly. A natural analogue
    for the Kauffman polynomial would use the Birman--Murakami--Wenzl algebra
    \cite{Murakami1987,BirmanWenzl1989}. Although the BMW algebra is semisimple for generic
    parameters, the specialization $a=1$ defining the $Q$-polynomial corresponds
    to the exceptional parameter $r=1$ in the usual BMW notation; for
    $n\geq3$, the resulting algebra is not semisimple in general
    \cite{RuiSi2009}. Consequently, the decomposition into minimal central
    idempotents used in the Temperley--Lieb argument cannot simply be
    specialized to the $Q$-polynomial. This is why we use the direct recurrence
    for two-strand twists and do not claim an analogous result for full twists
    on an arbitrary number of strands.
\end{remark}

\section{Stability of knot polynomials}
\label{sec:comparison}
In this section, we discuss the roots of the $Q$-polynomial and compare them
with those of other knot polynomials.

\subsection{Examples of r-stable $Q$-polynomials}
Hirasawa and Murasugi have studied stable Alexander polynomials; see, for
example,~\cite{doi:10.1142/S0218216519400170}. We discuss two notions of
stability for the Alexander, Jones, and $Q$-polynomials.
\begin{definition}[\cite{doi:10.1142/S0218216519400170}]
    A polynomial is said to be real stable (or, briefly, r-stable)
    if all its zeros are real.
    A polynomial is said to be circular stable (or, briefly, c-stable)
    if all its zeros have modulus 1.
    For a Laurent polynomial, these terms refer to its nonzero zeros.
\end{definition}

Our numerical computations suggest that the $Q$-polynomials of the knots
listed in Table~\ref{fig:Qrealroottable} are r-stable.
The knot table used in these computations was taken from the Mathematica
package \texttt{KnotTheory\textasciigrave}~\cite{KnotTheoryPackage}, and the
roots were computed numerically in Mathematica.

\begin{table}[htbp]
  \centering
  \begin{tabular}{lccccccccccccccc} 
    $3_1$ \\
    $4_1$ \\
    $5_2$ \\
    $6_3$ \\
    $7_7$ \\
    $8_{16}$ \\
    $9_{31}$ & $9_{42}$ & $9_{43}$ \\
    $10_{45}$ & $10_{161}$ \\
    $11_{n20}$ & $11_{n96}$ & $11_{n111}$ & $11_{n128}$ & $11_{n133}$ & $11_{n135}$ & $11_{n147}$ & $11_{n149}$ & $11_{n183}$ \\
  \end{tabular}
  \caption{Knots whose $Q$-polynomials appear numerically to be r-stable.}
  \label{fig:Qrealroottable}
\end{table}

Inspection of Table~\ref{fig:Qrealroottable} led us to the following infinite
family of $2$-bridge links.

\begin{proposition}\label{realpro}
    For every positive integer $n$, the $Q$-polynomial of the $2$-bridge link
    $K_n:=[1,1,\ldots,1,1]$ is r-stable, where the continued fraction has $2n$
    entries, all equal to $1$.
\end{proposition}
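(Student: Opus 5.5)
The 2-bridge link $[1,1,\ldots,1]$ with $2n$ entries equal to $1$ corresponds to the fraction $F_{2n+1}/F_{2n}$ (ratio of Fibonacci numbers). Its standard diagram is the closure of the alternating three-braid $(\sigma_2\sigma_1^{-1})^{n}$ shown in the picture \texttt{twobridge}. The plan is to compute $Q_{K_n}$ in closed form via a linear recurrence in $n$. We then show that, after the substitution $x=z+z^{-1}$, the resulting function $\QC_{K_n}(z)$ has all its nonzero roots on the unit circle, or on the real axis in the $x$-variable with the exceptional roots handled separately.

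\textbf{Step 1: a recurrence.} Apply the skein relation at the crossings of the last factor $\sigma_2\sigma_1^{-1}$. Resolving a crossing of a continued-fraction diagram yields $0$- and $\infty$-smoothings which are again 2-bridge links, with continued fractions obtained by deleting or shortening the last entries. Together with $Q$ of the unknot and the Hopf link, plus multiplicativity under connected sum, this gives a linear recurrence of fixed order (I expect order $2$ or $3$) for the vector $(Q_{K_n},Q_{K'_n})$. Here $K'_n$ is the companion with an odd number $2n+1$ of entries. The coefficients will be polynomials in $x$. Concretely, I would set up the $2\times 2$ or $3\times 3$ transfer matrix $T(x)$ acting on the skein module of the tangle, so that $Q_{K_n}=u^{\mathsf T}T(x)^n v$.

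\textbf{Step 2: a Chebyshev-type closed form.} Diagonalize $T(x)$ after putting $x=z+z^{-1}$. I expect the characteristic polynomial to factor nicely, as it did in Theorem~\ref{mainthone}. Then $\QC_{K_n}$ should be a combination like $\bigl(\alpha^{n+1}-\beta^{n+1}\bigr)/(\alpha-\beta)$ in eigenvalues $\alpha,\beta$ depending on $x$. This would exhibit $Q_{K_n}(x)$, up to an explicit prefactor, as $U_n(y(x))$ or as a similar orthogonal-polynomial expression in an auxiliary variable $y=y(x)$. Roots of $U_n$ are $y=\cos(k\pi/(n+1))$, which are real and lie in $(-1,1)$.

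\textbf{Step 3: pulling back real roots.} For each real root $y_k\in(-1,1)$, solve $y(x)=y_k$. Here $y(x)$ is quadratic or rational of low degree. Check that the discriminant is positive for $y_k\in(-1,1)$, so that both preimages are real. Then count degrees: $\deg Q_{K_n}$ equals the crossing number minus one (Property 6 of \cite{MR837528} and Kidwell's bound for alternating diagrams). Comparing with twice the number of $y_k$ plus the roots of the prefactor shows that these account for all nonzero roots. Finally, verify that the prefactor has only real roots, such as $x=\pm\sqrt{2}$ or $x=-2$, together with a power of $x$.

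\textbf{Main obstacle.} I expect Step 2 to be the main obstacle. The transfer matrix may not give a pure Chebyshev form, and a nonzero initial-vector term could spoil the clean factorization. If that happens, I would fall back on an interlacing argument. The idea is to show that $Q_{K_n}$ and $Q_{K'_n}$, suitably normalized, form a Sturm sequence, so that the roots of consecutive members interlace on the real line. This would follow from a three-term recurrence $p_{n+1}=(a x+b)p_n-c\,p_{n-1}$ with $c>0$, by the standard Favard-type argument; realness of all roots then follows by induction on $n$.
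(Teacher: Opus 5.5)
Your proposal has a genuine gap: the structure Step 2 expects is false for this family, and the fallback does not repair it.

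\textbf{Why Step 2 fails.} The recurrence in $n$ is genuinely of third order,
$Q_{K_n}=(x^2+2x)(Q_{K_{n-1}}-Q_{K_{n-2}})+Q_{K_{n-3}}$, with characteristic polynomial $(\lambda-1)\bigl(\lambda^2-(x^2+2x-1)\lambda+1\bigr)$. The component along the eigenvalue $1$ does not vanish. If it did, $Q_{K_n}$ would satisfy $Q_{K_{n+1}}=(x^2+2x-1)Q_{K_n}-Q_{K_{n-1}}$. This already fails for the initial values $Q_{K_{-1}}=Q_{K_0}=1$, $Q_{K_1}=2x+1-2x^{-1}$.

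Write $T_m(\cos\theta)=\cos m\theta$ and $u=(x+1)/2$. The actual closed form is
\[
x(x+3)\,Q_{K_n}(x)=(x+2)(x-1)+4\,T_{2n+1}(u).
\]
So the zeros of the numerator solve $T_{2n+1}(u)=-\tfrac12(2u^2-u-1)$. They are intersections of a Chebyshev polynomial with a fixed parabola, not pullbacks of Chebyshev nodes. Step 3 therefore has no explicit roots to pull back and count.

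Relatedly, the natural substitution is $x+1=z+z^{-1}$, not $x=z+z^{-1}$. The roots need not lie in $[-2,2]$: for $K_2=4_1$, $Q=2x^3+4x^2-2x-3$ has a root in $(-3,-2)$. So the unit-circle version of your goal is false.

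\textbf{Why the fallback fails.} Let $P_N$ be the $Q$-polynomial of $[1,\ldots,1]$ with $N$ entries. Then $P_N=xP_{N-1}+xP_{N-2}-P_{N-3}$, with characteristic polynomial $(\mu+1)\bigl(\mu^2-(x+1)\mu+1\bigr)$ and a nonzero $(-1)^N$ component. Hence neither $(Q_{K_n})$ nor the interleaved sequence of $Q_{K_n}$ and $Q_{K'_n}$ satisfies a three-term recurrence $p_{n+1}=(ax+b)p_n-cp_{n-1}$. Favard-type interlacing is therefore not available.

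\textbf{The missing idea.} Realness is proved by a sign-change (equioscillation) count, not by locating the roots. The paper first isolates the oscillating part. The differences $c_n=Q_{K_n}-Q_{K_{n-1}}$ do satisfy $c_n=(x^2+2x-1)c_{n-1}-c_{n-2}$. With $x=2\cos\theta-1$ this gives $c_n=2(x-x^{-1})\sin 2n\theta/\sin 2\theta$. Summing telescopes to $Q_{K_n}=g(\theta)/\bigl((2\cos\theta-1)(\cos\theta+1)\bigr)$, where $g(\theta)=\cos2\theta-\cos\theta+2\cos((2n+1)\theta)$.

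At $\theta_k=k\pi/(2n+1)$ the oscillating term equals $2(-1)^k$. Meanwhile $2\cos^2\theta_k-\cos\theta_k-1$ lies in $[-9/8,2]$ and equals $2$ only at $\theta=\pi$. Hence $g(\theta_k)$ strictly alternates in sign for $0\le k\le 2n$, and $g(\pi)=0$. This gives $2n+1$ distinct zeros in $u=\cos\theta\in[-1,1]$, which is the full degree of $g$ as a polynomial in $u$.

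Finally, the zero at $\theta=\pi$ cancels the factor $x+3$. The zero at $\theta=\pi/3$ cancels the factor $x$ exactly when $n\equiv0,2\pmod 3$; otherwise $x=0$ is a pole. So all nonzero roots of $Q_{K_n}$ are real.

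(Minor: the diagram in the figure is not the ordinary closure of $(\sigma_2\sigma_1^{-1})^n$. That closure is the unknot for $n=1$ and the Borromean rings for $n=3$.)
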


\begin{proof}
    The $Q$-polynomials of the links $K_n$ satisfy the recurrence relation
    \begin{equation*}
        Q_{K_n}(x) - (x^2+2x)Q_{K_{n-1}}(x) + (x^2+2x)Q_{K_{n-2}}(x) -Q_{K_{n-3}}(x) = 0,
    \end{equation*}
    with initial conditions $ Q_{K_{-1}}(x) = Q_{K_{0}}(x)=1 $ and $Q_{K_1}(x)=2x+1-2x^{-1}$.
    Let $c_n := Q_{K_n}(x) - Q_{K_{n-1}}(x)$. Then
    \begin{equation*}
        c_n(x) = (x^2+2x-1)c_{n-1}(x) - c_{n-2}(x)
    \end{equation*}
    holds with initial conditions $c_0(x)=0,c_1(x) = 2(x-x^{-1})$.

    The key observation is that $c_n(x)=2(x-x^{-1})U_n(\frac{x^2+2x-1}{2})$,
    where we use the shifted indexing
    \[
        U_n(\cos\theta)=\frac{\sin n\theta}{\sin\theta}.
    \]
    Thus, for $n\geq1$, our $U_n$ is the polynomial usually denoted by
    $U_{n-1}$. We then have
    \begin{equation*}
        Q_{K_n}(x) = 1 + 2(x-x^{-1})\sum_{k=0}^{n}U_k(\frac{x^2+2x-1}{2}).
    \end{equation*}
    Let $x := 2\cos\theta-1$. Using the formula $2\sin\alpha\sin\beta=\cos(\alpha-\beta)-\cos(\alpha+\beta)$,
    we have
    \begin{equation*}
        2\sin\theta\sum_{k=0}^{n}\frac{\sin 2k\theta}{\sin2\theta} = \frac{\cos(\theta)-\cos((2n+1)\theta)}{\sin2\theta}.
    \end{equation*}
    Therefore,
    \begin{align}
        Q_{K_n}(x)&=1 + \frac{8\cos\theta(\cos\theta-1)}{2\cos\theta-1}\cdot\frac{\cos(\theta)-\cos((2n+1)\theta)}{2\sin\theta\sin2\theta} \\
        &=1-\frac{2(\cos(\theta)-\cos((2n+1)\theta))}{(2\cos\theta-1)(\cos\theta+1)}\\
        &=\frac{-\cos(\theta)+\cos2\theta+2\cos((2n+1)\theta)}{(2\cos\theta-1)(\cos\theta+1)}.\label{nahan}
    \end{align}

    Let the numerator in equation~(\ref{nahan}) be
    \[
        g(\theta)=-\cos\theta+\cos2\theta+2\cos((2n+1)\theta).
    \]
    Set $\theta_k=k\pi/(2n+1)$ for $0\leq k\leq2n+1$. Since
    \[
        g(\theta_k)
        =2(-1)^k+2\cos^2\theta_k-\cos\theta_k-1,
    \]
    and
    \[
        -\frac98\leq 2u^2-u-1\leq2
        \qquad(-1\leq u\leq1),
    \]
    we have $g(\theta_k)>0$ when $k$ is even and
    $g(\theta_k)<0$ when $k<2n+1$ is odd. Moreover, $g(\pi)=0$.
    Hence $g$ has a zero in each interval
    $(\theta_k,\theta_{k+1})$ for $0\leq k\leq2n-1$, together with the zero
    $\theta=\pi$. These are $2n+1$ distinct zeros. Since $g(\theta)$ is a
    polynomial of degree $2n+1$ in $u=\cos\theta$, and $\cos\theta$ is injective
    on $[0,\pi]$, these are all the zeros of $g$, and they are simple when $g$
    is regarded as a polynomial in $u$.
    Since $x=2u-1$, the numerator of~(\ref{nahan}) has degree $2n+1$ in
    $x$, whereas its denominator has degree $2$. After the cancellations
    described below, it follows that the highest degree of $Q_{K_n}(x)$ is
    $2n-1$.

    The details differ according to whether $K_n$ is a knot or a link
    (see Remark~\ref{remlink}).
    \begin{enumerate}
        \item Suppose that $n\equiv0,2\pmod3$, so that $K_n$ is a knot.
        In this case $2n+1\equiv1,5\pmod6$, and hence
        \[
            g\left(\frac{\pi}{3}\right)
            =-1+2\cos\left(\frac{(2n+1)\pi}{3}\right)=0.
        \]
        Thus the zeros at $\theta=\pi/3$ and $\theta=\pi$ cancel the two
        factors in the denominator of~(\ref{nahan}). The lowest degree of
        $Q_{K_n}(x)$ is $0$. The remaining
        $2n-1$ zeros are distinct and real, so $Q_{K_n}(x)$ is r-stable.

        \item Suppose that $n\equiv1\pmod3$, so that $K_n$ is a
        $2$-component link. In this case $2n+1\equiv3\pmod6$, and hence
        \[
            g\left(\frac{\pi}{3}\right)=-3\neq0.
        \]
        The zero at $\theta=\pi$ cancels the factor $\cos\theta+1$, whereas
        $\theta=\pi/3$, corresponding to $x=0$, gives a pole. The lowest
        degree of $Q_{K_n}(x)$ is $-1$.
        Therefore $xQ_{K_n}(x)$ has the remaining $2n$ distinct real
        zeros. Equivalently, all nonzero zeros of $Q_{K_n}(x)$ are real, so
        $Q_{K_n}(x)$ is r-stable.
    \end{enumerate}
\end{proof}

\begin{figure}
    \twobridge
    \caption{$2$-bridge links $[1,1,\ldots,1,1]$}
    \label{fig:twobridgeknots}
\end{figure}

\begin{remark}\label{remlink}
    $K_n$ is a knot if $n\equiv0,2$ modulo $3$.
    It is a $2$-component link if $n\equiv1$ modulo $3$.
\end{remark}

\begin{remark}
    We have verified the analogue of Proposition~\ref{realpro}, with $2n+1$
    entries equal to $1$, for a few small values of $n$, but we do not yet
    have a complete proof.
\end{remark}

\subsection{r-stability and c-stability of knot polynomials}

It is known that the Alexander polynomials of torus knots are c-stable.
There are also twist knots whose Alexander polynomials are r-stable.

Table~\ref{tab:stability-comparison} summarizes the contrasting behavior of
the three knot polynomials. Here ``Yes'' means that a nonconstant example
exists, whereas ``No'' means that no nonconstant example exists.

\begin{table}[htbp]
    \centering
    \begin{tabular}{c|cc}
        \textbf{Polynomial} & \textbf{c-stable examples} &
        \textbf{r-stable examples} \\
        \hline
        Alexander & Yes & Yes \\
        Jones     & Yes~\cite{Mroczkowski2022} &
        No (Proposition~\ref{prop:jones-not-r-stable}) \\
        $Q$       & No (Proposition~\ref{prop:q-not-c-stable}) &
        Yes (Proposition~\ref{realpro})
    \end{tabular}
    \caption{Existence of nonconstant c-stable and r-stable knot polynomials.}
    \label{tab:stability-comparison}
\end{table}

For the Jones polynomial, we first record the following proposition.

\begin{proposition}\label{prop:jones-not-r-stable}
    The Jones polynomial of a knot cannot be both nonconstant and r-stable.
\end{proposition}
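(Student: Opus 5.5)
Suppose $J_K(q)$ is nonconstant and r-stable. Since $J_K\in\ZZ[q^{\pm1}]$, we write $J_K(q)=c\,q^{k}\prod_{j=1}^{d}(q-\gamma_j)$ with $c\in\ZZ\setminus\{0\}$, $d\ge1$, and all $\gamma_j$ real and nonzero. The plan is to reach a contradiction by evaluating at the special points recalled in the preliminaries, $|J_K(1)|=|J_K(i)|=|J_K(e^{2\pi i/3})|=1$. The key idea is to compare the three moduli factor by factor. For a real $\gamma$ and a point $\omega$ on the unit circle, $|\omega-\gamma|^2=1-2\gamma\operatorname{Re}\omega+\gamma^2$, which depends only on $\operatorname{Re}\omega$. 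At the three points we have $\operatorname{Re}\omega=1,0,-\tfrac12$, respectively.

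We would take the ratio of $|J_K(i)|^2$ to $|J_K(1)|^2$:
\[
\prod_{j}\frac{1+\gamma_j^2}{(1-\gamma_j)^2}=1 .
\]
We would also take the ratio of $|J_K(e^{2\pi i/3})|^2$ to $|J_K(i)|^2$:
\[
\prod_j\frac{1+\gamma_j+\gamma_j^2}{1+\gamma_j^2}=1 .
\]
The second identity is the decisive one. For each real $\gamma_j$, the factor $\frac{1+\gamma_j+\gamma_j^2}{1+\gamma_j^2}=1+\frac{\gamma_j}{1+\gamma_j^2}$ exceeds $1$ iff $\gamma_j>0$. So the second identity alone does not force a contradiction, because positive and negative roots could balance. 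We therefore use the first identity as well. Writing $h_\gamma(t)=\log(1-2\gamma t+\gamma^2)$, the three conditions say that $S(t)=\sum_j h_{\gamma_j}(t)$ takes equal values at $t=1,0,-\tfrac12$. Each $h_\gamma$ is concave in $t$, since it is the logarithm of an affine function, so $S$ is concave on $[-1,1]$ where it is defined. A concave function that takes the same value at three distinct points must be constant on the interval between them. Then $S$ is constant on $[-\tfrac12,1]$, and by analyticity $S'\equiv0$. The second derivative $\sum_j -4\gamma_j^2/(1-2\gamma_j t+\gamma_j^2)^2$ is strictly negative when $d\ge1$, since every $\gamma_j\neq0$. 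This is a contradiction.

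One care point is that $t=1$ corresponds to a possible root $\gamma_j=1$, where $h_\gamma(1)=-\infty$. This case is excluded because $J_K(1)=1\neq0$, so no $\gamma_j$ equals $1$. Hence $S$ is finite and strictly concave on $[-\tfrac12,1]$. The prefactor $|c\,q^k|$ equals $|c|$ at all three points, so it cancels in the comparisons. The main thing to verify carefully is exactly this strict-concavity step, including that equal values at three points contradict strict concavity. That is immediate, because a strictly concave function satisfies $S(0)>\tfrac13S(-\tfrac12)+\tfrac23S(1)$, with $0=\tfrac13(-\tfrac12)+\tfrac23(1)$.
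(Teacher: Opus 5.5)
\textbf{The main argument matches the paper's, but it starts from an unjustified step.} Your concavity argument is the paper's argument. You pass to $u=\cos\theta$, write $\log|J_K(e^{i\theta})|^2$ as a sum of logarithms of positive affine functions of $u$, note that it is strictly concave on $[-\tfrac12,1]$, and contradict this with the equal values at $u=1,0,-\tfrac12$ coming from $q=1,i,e^{2\pi i/3}$. Your remark that $J_K(1)\neq0$ keeps every factor positive at $u=1$ is a detail the paper leaves implicit.

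\textbf{The gap: the monomial case.} You write $J_K(q)=c\,q^k\prod_{j=1}^d(q-\gamma_j)$ with $d\geq1$, but for a Laurent polynomial, being nonconstant does not give $d\geq1$. The case $J_K(q)=c\,q^k$ with $k\neq0$ is nonconstant. Under the paper's convention that r-stability of a Laurent polynomial refers only to its nonzero zeros, it is also vacuously r-stable. Your method cannot exclude this case. When $d=0$ your function $S$ is identically zero, so there is no strict concavity to contradict. Moreover $|c\,q^k|=|c|$ on the whole unit circle, so the three modulus identities only give $|c|=1$. No comparison of moduli at points of the unit circle can rule out, say, $J_K(q)=q^6$.

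\textbf{How to close it.} The paper handles this case before factoring, using an extra input from Jones: $J_K(1)=1$ and $J_K'(1)=0$. For $J_K=c\,q^k$ these give $c=1$ and $ck=0$, so $k=0$, contradicting nonconstancy. Only then does it follow that $J_K$ has a nonzero root, so that $d\geq1$. You need to add this step, or an equivalent fact such as the divisibility of $J_K(q)-1$ by $(q-1)(q^3-1)$, before your concavity argument applies.
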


\begin{proof}
    Suppose, to the contrary, that $J_K(q)$ is nonconstant and r-stable.
    First, $J_K(q)$ cannot be a Laurent monomial. Indeed, $J_K(1)=1$ and
    $J_K'(1)=0$ by \cite[Section~12.2]{Jones1987HeckeAR}; if
    $J_K(q)=a q^r$, these identities imply $a=1$ and $r=0$, contrary to the
    assumption that $J_K(q)$ is nonconstant. Thus $J_K(q)$ has a nonzero root.
    Then we have the factorization
    \[
        J_K(q)=a q^r\prod_{i=1}^d(q-\alpha_i),
    \]
    where $d>0$, $\alpha_i\in\mathbb{R}\setminus\{0\}$, and $a$ is the leading coefficient of $J_K(q)$.
    By Section~12 of \cite{Jones1987HeckeAR},
    we have $|J_K(q)|^2 = 1$ at $q = 1, i, e^{\frac{2\pi i}{3}}$.
    \begin{align*}
        f(\theta) &:= |J_K(e^{i \theta})|^2 = |a|^2 \prod_{i=1}^d (e^{i \theta} - \alpha_i)(e^{-i \theta} - \alpha_i) \\
        &=|a|^2 \prod_{i=1}^d (1 - 2\alpha_i \cos(\theta) + \alpha_i^2).
    \end{align*}
    Put $u=\cos\theta$ and define
    \[
        F(u):=\log\left(
        |a|^2\prod_{i=1}^d(1-2\alpha_i u+\alpha_i^2)
        \right)
        \qquad (-1/2\leq u\leq1).
    \]
    Then
    \[
        F''(u)
        =-\sum_{i=1}^d
        \frac{4\alpha_i^2}{(1-2\alpha_i u+\alpha_i^2)^2}<0,
    \]
    so $F$ is strictly concave on $[-1/2,1]$. On the other hand, the three
    special values above give
    \[
        F(1)=F(0)=F(-1/2)=0.
    \]
    Strict concavity applied to the endpoints $-1/2$ and $1$ would instead
    give $F(0)>0$, a contradiction. Therefore, no nonconstant Jones polynomial
    of a knot is r-stable.
\end{proof}

Mroczkowski~\cite{Mroczkowski2022} found an infinite family of knots whose
Jones polynomials are c-stable.

In contrast, we have the following proposition for the $Q$-polynomial.

\begin{proposition}\label{prop:q-not-c-stable}
    The $Q$-polynomial of a knot cannot be both nonconstant and c-stable.
\end{proposition}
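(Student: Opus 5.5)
We argue by contradiction, in the same spirit as Proposition~\ref{prop:jones-not-r-stable}, using the special values recalled in the preliminaries. For a knot $K$ we have $Q_K(1)=1$ and $Q_K(-2)=(-2)^{0}=1$. Moreover, for a knot $Q_K$ has no negative powers, so $Q_K\in\ZZ[x]$ with constant term $Q_K(0)$.

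Suppose $Q_K$ is nonconstant and c-stable. Factor
\[
    Q_K(x)=a\,x^r\prod_{j=1}^d(x-\alpha_j), \qquad |\alpha_j|=1,\ a\in\ZZ\setminus\{0\}.
\]
First we rule out $d=0$. In that case $Q_K=ax^r$ with $r\ge1$ (since $Q_K$ is nonconstant). Then $Q_K(1)=1$ forces $a=1$, and $Q_K(-2)=(-2)^r=1$ forces $r=0$, a contradiction. Hence $d\ge1$.

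The key step is to estimate $|Q_K(-2)|$ and $|Q_K(1)|$ using only $|\alpha_j|=1$. For each root, $|-2-\alpha_j|\ge 2-|\alpha_j|=1$, with equality only if $\alpha_j=-1$. Also $|1-\alpha_j|\le 2$. Comparing the two values factor by factor gives
\[
    1=|Q_K(-2)|=|a|\,2^r\prod_j|2+\alpha_j|.
\]
For this to hold we need $|a|=1$, $r=0$, and every $|2+\alpha_j|=1$. The last condition means $\alpha_j=-1$ for all $j$. So $Q_K(x)=\pm(x+1)^d$ with $d\ge1$. But then $Q_K(1)=\pm2^d\neq1$, a contradiction. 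This completes the proof.

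The main obstacle is just to make sure the special values $Q_K(1)=1$ and $Q_K(-2)=1$ are correctly applied, and that $Q_K$ is a genuine polynomial for knots so that the factor $x^r$ has $r\ge0$. Both facts are recorded in the preliminaries: $Q_K(-2)=(-2)^{s-1}$ with $s=1$, and the statement that $Q_K$ has no negative powers for a knot. Note that we use the integrality of $a$ only to get $|a|\ge1$. The inequality $|2+\alpha|\ge1$ on the unit circle, with equality only at $\alpha=-1$, is elementary.
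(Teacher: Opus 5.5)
Your proof is correct and is essentially the paper's argument: both use $|{-2}-\alpha|\ge 1$ on the unit circle (the paper via $|{-2}-e^{i\theta}|^2=5+4\cos\theta$), with equality only at $\alpha=-1$, together with $|a|\ge 1$, to force $Q_K=\pm(x+1)^d$, which contradicts $Q_K(1)=1$. The only differences are that you explicitly allow and then eliminate a factor $x^r$ and treat $d=0$ separately, which the paper leaves implicit, and that the bound $|1-\alpha_j|\le 2$ you mention is never actually used.
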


\begin{proof}
    For any knot $K$, we have $Q_K(1) = 1$ and $Q_K(-2) = 1$.
    Suppose that $Q_K(x)$ is nonconstant and c-stable.
    Write
    \[
        Q_K(x)=a\prod_{j=1}^d(x-\alpha_j),
    \]
    where $d>0$, $\alpha_j=e^{i\theta_j}$, and $a$ is the leading coefficient
    of $Q_K(x)$. We then have
    \begin{align*}
        |Q_K(-2)|^2 &= |a|^2\prod_{j=1}^d |(-2) - \alpha_j|^2 \\
        &= |a|^2\prod_{j=1}^d (4 + 4\cos\theta_j + 1) \\
        &= |a|^2\prod_{j=1}^d (5 + 4\cos\theta_j) \\
        &\geq |a|^2\geq 1,
    \end{align*}
    Equality is possible only if $|a|=1$ and $\alpha_j=-1$ for every $j$.
    In this case, $Q_K(1)=\pm2^d\neq1$. This is a contradiction. Therefore,
    no nonconstant $Q$-polynomial of a knot is c-stable.
\end{proof}

\section*{Acknowledgements}
The author thanks Yeonhee Jang for her thoughtful guidance and helpful discussions,
Taizo Kanenobu for pointing out the $2$-bridge description $[1,1,\ldots,1,1]$ of one
of the examples, an observation that led to the family studied in Proposition~\ref{realpro},
Jumpei Yasuda and Mikami Hirasawa for helpful discussions about this work.
The author used GPT-5.6 Sol and GPT-6 Astra via Codex while preparing this article and takes full
responsibility for its contents.
This work was supported by JST SPRING, Grant Number JPMJSP2139.

\bibliographystyle{plain}
\bibliography{sanko}

@article{10.2140/agt.2005.5.1,
author = {Abhijit Champanerkar and Ilya Kofman},
title = {{On the Mahler measure of Jones polynomials under twisting}},
volume = {5},
journal = {Algebraic \& Geometric Topology},
number = {1},
publisher = {MSP},
pages = {1 -- 22},
year = {2005},
doi = {10.2140/agt.2005.5.1},
URL = {https://doi.org/10.2140/agt.2005.5.1}
}

@article{BirmanWenzl1989,
  author = {Joan S. Birman and Hans Wenzl},
  title = {Braids, link polynomials and a new algebra},
  journal = {Transactions of the American Mathematical Society},
  volume = {313},
  number = {1},
  pages = {249--273},
  year = {1989},
  doi = {10.1090/S0002-9947-1989-0992598-X},
  url = {https://doi.org/10.1090/S0002-9947-1989-0992598-X}
}

@article{Murakami1987,
  author = {Jun Murakami},
  title = {The {Kauffman} polynomial of links and representation theory},
  journal = {Osaka Journal of Mathematics},
  volume = {24},
  number = {4},
  pages = {745--758},
  year = {1987},
  doi = {10.18910/4549},
  url = {https://doi.org/10.18910/4549}
}

@article{RuiSi2009,
  author = {Hebing Rui and Mei Si},
  title = {Gram determinants and semisimplicity criteria for {Birman--Wenzl}
           algebras},
  journal = {Journal f{\"u}r die reine und angewandte Mathematik},
  volume = {631},
  pages = {153--179},
  year = {2009},
  doi = {10.1515/CRELLE.2009.045},
  url = {https://doi.org/10.1515/CRELLE.2009.045}
}

@article {MR837528,
    AUTHOR = {Brandt, Robert D. and Lickorish, W. B. R. and Millett, Kenneth
              C.},
     TITLE = {A polynomial invariant for unoriented knots and links},
   JOURNAL = {Invent. Math.},
  FJOURNAL = {Inventiones Mathematicae},
    VOLUME = {84},
      YEAR = {1986},
    NUMBER = {3},
     PAGES = {563--573},
      ISSN = {0020-9910,1432-1297},
   MRCLASS = {57M25},
  MRNUMBER = {837528},
MRREVIEWER = {Cameron\ McA.\ Gordon},
       DOI = {10.1007/BF01388747},
       URL = {https://doi.org/10.1007/BF01388747},
}

@article{Mroczkowski2022,
  author = {Maciej Mroczkowski},
  title = {Infinitely many roots of unity are zeros of some {Jones} polynomials},
  journal = {Geometriae Dedicata},
  volume = {216},
  number = {4},
  year = {2022},
  month = jun,
  pages = {43},
  doi = {10.1007/s10711-022-00708-4},
  url = {https://doi.org/10.1007/s10711-022-00708-4}
}

@article{Jones1987HeckeAR,
  title={{Hecke} algebra representations of braid groups and link polynomials},
  author={Vaughan F. R. Jones},
  journal={Annals of Mathematics},
  year={1987},
  volume={126},
  pages={335-388},
  url={https://api.semanticscholar.org/CorpusID:122120158}
}

@article{Ho1985,
  author  = {Ho, C. F.},
  title   = {A polynomial invariant for knots and links---preliminary report},
  journal = {Abstracts of the American Mathematical Society},
  volume  = {6},
  year    = {1985},
  pages   = {300}
}

@article{LAWTON1983356,
title = {A problem of {Boyd} concerning geometric means of polynomials},
journal = {Journal of Number Theory},
volume = {16},
number = {3},
pages = {356-362},
year = {1983},
issn = {0022-314X},
doi = {https://doi.org/10.1016/0022-314X(83)90063-X},
url = {https://www.sciencedirect.com/science/article/pii/0022314X8390063X},
author = {Wayne M. Lawton}
}

@article{10.1007/BF02417878,
author = {J. L. W. V. Jensen},
title = {Sur un nouvel et important th{\'e}or{\`e}me de la th{\'e}orie des fonctions},
volume = {22},
journal = {Acta Mathematica},
publisher = {Institut Mittag-Leffler},
pages = {359 -- 364},
year = {1900},
doi = {10.1007/BF02417878},
URL = {https://doi.org/10.1007/BF02417878}
}

@book{Schinzel_2000, 
place={Cambridge}, 
series={Encyclopedia of Mathematics and its Applications}, 
title={Polynomials with Special Regard to Reducibility}, 
publisher={Cambridge University Press}, 
author={Schinzel, A.}, 
year={2000}, 
collection={Encyclopedia of Mathematics and its Applications}
}

@article{doi:10.1142/S0218216519400170,
author = {Hirasawa, Mikami and Murasugi, Kunio},
title = {Stable {Alexander} polynomials of arborescent links},
journal = {Journal of Knot Theory and Its Ramifications},
volume = {28},
number = {13},
pages = {1940017},
year = {2019},
doi = {10.1142/S0218216519400170},
URL = { 
        https://doi.org/10.1142/S0218216519400170
},
eprint = { 
        https://doi.org/10.1142/S0218216519400170
}
}

@article {MR2050045,
    AUTHOR = {Silver, Daniel S. and Williams, Susan G.},
     TITLE = {Mahler measure of {A}lexander polynomials},
   JOURNAL = {J. London Math. Soc. (2)},
  FJOURNAL = {Journal of the London Mathematical Society. Second Series},
    VOLUME = {69},
      YEAR = {2004},
    NUMBER = {3},
     PAGES = {767--782},
      ISSN = {0024-6107,1469-7750},
   MRCLASS = {57M25 (11R06)},
  MRNUMBER = {2050045},
       DOI = {10.1112/S0024610704005289},
       URL = {https://doi.org/10.1112/S0024610704005289},
}

@misc{KnotTheoryPackage,
  author = {Bar-Natan, Dror},
  title = {The {Mathematica} package {KnotTheory}},
  year = {2005},
  howpublished = {Knot Atlas},
  url = {https://katlas.org/wiki/The_Mathematica_Package_KnotTheory},
  note = {Accessed September 4, 2026}
}

\end{document}